\documentclass[12pt, a4paper]{article}
\usepackage{amsmath, amsthm, amssymb, enumitem, graphicx, color, hyperref, svg}
\usepackage[square,numbers]{natbib}
\allowdisplaybreaks

\newtheorem{theorem}{Theorem}
\newtheorem*{maintheorem}{Main Theorem}

\newtheorem{defin}{Definition}
\newtheorem{lemma}{Lemma}

\newcommand{\bZ}{\mathbb{Z}}
\newcommand{\bR}{\mathbb{R}}

\newcommand{\into}{\hookrightarrow}
\newcommand{\vol}{\operatorname{vol}}
\newcommand{\fillrad}{\operatorname{FillRad}}
\newcommand{\fillvol}{\operatorname{FillVol}}
\newcommand{\cage}{\operatorname{Cage}}

\begin{document}
\author{Isabel Beach}
\title{The Shortest Geodesic Flower on a Manifold with Locally Convex Ends and Finite Volume}
\maketitle

\begin{abstract}
    Suppose $M$ is a complete, non-compact $n$-dimensional Riemannian manifold with locally convex ends and finite volume. We prove that $M$ admits a non-trivial geodesic net with one vertex, at most $(n+2)(n+1)/2$ edges, and total length at most $$(n+2)(n+1)(n/2)\vol(M)^{1/n}.$$ This is a quantitative version of the main result of G. R. Chambers, Y. Liokumovich, A. Nabutovsky and R. Rotman in \cite{clnr}.
\end{abstract}

\section{Introduction}

Geodesic nets are the critical points of the length/energy functional on the space of immersed multigraphs in a given Riemannian manifold without boundary (see Definition \ref{def:net}). 
In this article, we prove the following quantitative result.
\begin{maintheorem}
    \label{theorem:main}
    Let $M$ be a complete, non-compact $n$-dimensional Riemannian manifold with locally convex ends and finite volume. Then $M$ admits a non-trivial geodesic net with one vertex, at most $(n+2)(n+1)/2$ edges, and total length at most $(n+2)(n+1)(n/2)\vol(M)^{1/n}.$
\end{maintheorem}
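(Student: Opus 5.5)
The plan is to combine a quantitative filling argument in the spirit of Gromov (as used by Nabutovsky--Rotman to bound short geodesic loops and flowers) with the locally convex ends hypothesis, which ensures that minimizing sequences of nets cannot escape to infinity.

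\emph{Reduction to a compact region.} Since the ends are locally convex, I would exhaust $M$ by compact domains $K$ with locally convex boundary, chosen so that $\vol(M\setminus K)$ is as small as we like. A geodesic net in $K$ whose vertex lies in the interior, or which is obtained as a limit of curve-shortening flows that stay inside $K$, is then a genuine geodesic net in $M$. Local convexity ensures that any length-nonincreasing deformation (Birkhoff-type curve shortening applied edge by edge, together with moving the vertex) can be projected back into $K$ without increasing length. So it suffices to produce a flower inside such a $K$.

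\emph{Filling argument.} Embed $K$ isometrically in $L^\infty$ (Kuratowski). By Gromov's filling radius inequality, $\fillrad(K)\le c_n\vol(M)^{1/n}$; for the constant I would use the bound that yields the factor $n/2$ once combined with the combinatorics below. Argue by contradiction: assume every nontrivial geodesic flower with one vertex and at most $(n+2)(n+1)/2$ petals has total length above $L=(n+2)(n+1)(n/2)\vol(M)^{1/n}$. Build a map from a fine triangulation of $K$, or of a cone over its boundary relative to $\partial K$, to a filling of $K$ within distance less than $\fillrad$, skeleton by skeleton, as in the Nabutovsky--Rotman extension argument. Vertices map to themselves. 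An edge maps to a short path. A $k$-simplex, whose boundary is already mapped, needs a contraction of the loops on its boundary. The number of edges of an $(n+1)$-simplex is $(n+2)(n+1)/2$, which explains both the edge count and the leading combinatorial factor in $L$.

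\emph{Key step and main obstacle.} The crucial step is to show that the $1$-skeleton image of each simplex, a flower with at most $(n+2)(n+1)/2$ petals based at a point, can be contracted through flowers of controlled length. This is done with a min-max/Birkhoff process on the space of based flowers: if no short geodesic flower exists, the length functional has no critical points below $L$, so a gradient-like deformation (shortening petals and sliding the vertex to balance unit tangent vectors) collapses the flower to a point without exceeding length $L$. Coning then extends the map over higher skeleta inside the $\fillrad$-neighbourhood, producing a filling of $K$ that is too small and hence a contradiction.

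I expect the main obstacle to be this deformation. Three things need care: making the flow well-defined near degenerate (zero-length) petals, preventing it from leaving $K$ (this is where locally convex ends are used), and extracting a nontrivial limiting net. For the last point I would first try the Nabutovsky--Rotman "vertex balancing" discretization with broken geodesics, taking a limit of almost-critical flowers and using compactness of $K$ together with a positive lower bound on lengths coming from the injectivity radius on $K$.
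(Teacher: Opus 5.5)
Your overall template matches the paper's: retract a filling in $L^\infty$ onto $M$ skeleton by skeleton, using a flower-shortening flow on the cage of each simplex. The count $(n+2)(n+1)/2$ comes from the $1$-skeleton of an $(n+1)$-simplex, and each edge has length at most roughly $2\fillrad\le n\vol(M)^{1/n}$. However, your treatment of non-compactness, which is the whole difficulty here, has a genuine gap.

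\textbf{The compact convex $K$ is not available.} Your reduction to compact domains $K$ with locally convex boundary inverts the hypothesis. Definition~\ref{def:ends} makes the \emph{ends} $\overline{M_i^\Sigma}$ locally convex, not the core. A cage that enters an end is therefore trapped there, and it may shrink off to infinity. A loop around a cusp of a finite-volume hyperbolic manifold does exactly this, and its horoball ends are convex. Moreover, nearest-point projection onto the complement of a convex region lengthens curves rather than shortening them. So nothing keeps the flow in a compact set. The paper, following \cite{clnr}, handles this differently:
\begin{itemize}
\item it one-point compactifies each end to form $C^\Sigma$;
\item the weak filling sends flowers lying outside the $2L$-neighbourhood of $M_0^\Sigma$ to the point at infinity;
\item continuity is only required after composing with the quotient $q$ that collapses $\cup_i C_i^\Sigma$.
\end{itemize}
All maps are then maps of pairs into $(C^\Sigma,\cup_i C_i^\Sigma)$. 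The obstruction is the relative fundamental class in $H_n(\,\cdot\,;\bZ_2)$ modulo the ends.

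\textbf{The filling step and the contradiction are also off.} $K$ has boundary, so it is not a cycle, and ``$\fillrad(K)$'' is not something you can bound. Smallness of $\vol(M\setminus K)$ is not the relevant quantity either. The paper proceeds as follows.
\begin{itemize}
\item It uses finite volume, via the co-area formula and Sard's theorem, to cut $M$ along hypersurfaces $\Gamma_i$ of arbitrarily small $(n-1)$-volume.
\item It caps each $\Gamma_i$ by a chain $F(\Gamma_i)$ of tiny volume and tiny filling radius, using Gromov's filling-volume inequality and Nabutovsky's inequality in dimension $n-1$.
\item This gives an honest $n$-cycle $\Gamma$ with $\vol(\Gamma)\le\vol(M)+\epsilon$, hence $\fillrad(\Gamma)\le(n/2)\vol(M)^{1/n}+\epsilon$.
\end{itemize}
Local convexity of the ends is used precisely to send the caps into the ends. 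Vertices of $F(\Gamma_i)$ go to nearest points of $\Gamma_i\subset\overline{M_j^\Sigma}$, and the short minimizing geodesics joining them stay in $\overline{M_j^\Sigma}$.

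The contradiction then comes from a map $\psi$ \emph{from} the filling $F(\Gamma)$ \emph{to} $C^\Sigma$. This map extends a map homotopic to the inclusion of $(\overline{M_0^\Gamma},\cup_i\Gamma_i)$, which would force the nonzero relative class to factor through $j_*=0$. Your ``map from $K$ to a filling'' and ``filling that is too small'' do not describe a valid contradiction. Finally, compactness for the limiting flower comes from the obstructing flowers lying within $2L$ of $M_0^\Sigma$, not from compactness of a convex $K$.
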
 
\noindent 
The notion of a locally convex end is given in Definition \ref{def:ends}. Essentially, this means that $M$ can be divided into a bounded region and some number of unbounded regions that are locally geodesically convex.
\par
One motivation for the study of geodesic nets is the fact that they arise as minimal varifolds.
The Almgren--Pitts min-max values for $p$-sweepouts through 1-cycles on a given manifold, denoted $\omega_p^1$, are realized by the lengths of geodesic nets on that manifold. This means that for each $p$, there is a geodesic net on the associated manifold whose length equals $\omega_p^1$. In fact, it was proven by O. Chodosh and C. Mantoulidis in 2023 that, on surfaces, these widths are realized by the lengths of closed geodesics (i.e., geodesic nets with one edge and one vertex) \cite{chodosh_mantoulidis}.
M. Gromov conjectured in 2003 that these widths should satisfy a Weyl Law, so that on a compact $n$-dimensional manifold $M$ we would have
$$
\lim_{p\to\infty}
\frac{\omega_p^1}{p^{(n-1)/n}}=a(n)\vol(M)^{1/n}
$$
for some constant $a(n)$ \cite{gromov_waists}.
This was recently proven in dimension three by L. Guth and Y. Liokumovich \cite{lio_guth}, and later in all dimensions by B. Staffa \cite{staffa2025weyllaw1cycles}. 
Staffa also proved a version of the bumpy metrics theorem for geodesic nets in 2023 \cite{staffa2023nets}, meaning that a generic metric admits only embedded geodesic nets that are non-degenerate (i.e., that do not have non-parallel Jacobi fields). This was also proven independently by O. Chodosh and C. Mantoulidis in 2023 \cite{chodosh_mantoulidis}.
\par
Studying geodesic nets can also be a first step in understanding closed geodesics. We are interested in using geodesic nets to address the following two questions regarding closed geodesics. The first question we consider, posed by V. Bangert, is to determine which non-complete Riemannian manifolds with finite volume admit closed geodesics. Every surface with finite area admits a closed geodesic by G. Thorbergsson \cite{thorbergsson1978} and Bangert \cite{bangert1980}, but this question is open in higher dimensions. In 2023, G. R. Chambers, Y. Liokumovich, A. Nabutovsky and R. Rotman conjectured that every complete, non-compact manifold with locally convex ends admits a closed geodesic \cite{clnr}. Under certain topological conditions, this conjecture was proven by A. Dey in \cite{dey2024}. There are also additional existence results in specific cases due to V. Benci and F. Giannoni \cite{bencigiannoni1991} and L. Asselle and M. Mazzucchelli \cite{assellemazzucchelli2016}.
\par  
The second question we consider, due to Gromov, is whether the length of a shortest closed geodesic on a compact $n$-dimensional manifold with finite volume is bounded by $c(n)\vol(M)^{1/n}$, where $c(n)$ is some universal constant that depends only on $n$. 
There are many results concerning length bounds for a shortest closed geodesic on a closed surface (for a survey, see e.g. \cite{CrokeKatz2003}). There are also results for non-compact surfaces. For example, C. B. Croke proved that a shortest closed geodesic on a surface $M$ with finite area has length at most $31\sqrt{ \vol(M)}$ \cite{croke1988}. This bound was improved by the author and Rotman in 2020 to at most $4\sqrt{2 \vol(M)}$ \cite{my_baby}. However, there are few results in dimensions greater than two (see, for example, \cite{rotman2000, nabutovsky2002}). 
\par
In light of the above, we are interested in the following two questions concerning geodesic nets. First, which non-compact Riemannian manifolds with finite volume admit geodesic nets? Second, how long is a shortest geodesic net on a non-compact Riemannian manifold with finite volume?
In 2023, Chambers, Liokumovich, Nabutovsky and Rotman proved that there exists a geodesic net on every complete, non-compact manifold with locally convex ends (but not necessarily with finite volume) \cite{clnr}, giving a partial answer to the first question. Moreover, the net they obtain has at most one vertex.
There are currently no published answers to the second question, although there are several results concerning length bounds for geodesic nets on closed manifolds, such as \cite{rotman2005, rotman2007, rotman_flowers}. For example, in \cite{rotman_flowers}, Rotman proved that 
a closed $n$-dimensional Riemannian manifold admits a geodesic net with at most $3^{(n+1)^2}$ edges, one vertex, and total length at most 
$$		2((n + 1)!)^{5/2} 3^{(n+1)^3} (n + 1)n^n \vol(M)^{1/n}.$$
In the present article, we build upon \cite{clnr} to prove a length bound for a shortest geodesic net on a complete, non-compact manifold with locally convex ends and finite volume. This partially answers the second question posed above. Our result differs from previous quantitative results for geodesic nets because it concerns a manifold that is not compact. It also differs from the quantitative results for closed geodesics on surfaces as it applies in dimensions greater than two.

\section{Results}

Our proof of our main theorem builds upon the existence proof of \cite{clnr}. However, unlike \cite{clnr}, we want to ensure that the geodesic net we obtain has bounded length. The proof uses Gromov's technique of pseudo-extension. Essentially, we will take a map $\phi$ and prove that a certain extension $\psi$ of $\phi$ to a larger domain cannot exist for topological reasons. We will then prove that under certain geometric assumptions (in our case, the absence of a short geodesic net with one vertex) we can in fact construct such an extension. The resulting contradiction proves that our geometric assumption cannot hold.
\par 
This article is organized as follows.
In Section \ref{sec:convex}, we define what it means for the manifold $M$ to have locally convex ends. Using the fact that $M$ has finite volume, we then use the co-area formula to cut off the infinite ends of $M$ by hypersurfaces of arbitrarily small volume. Our initial map, $\phi$, will be (up to a small homotopy) the inclusion of the bounded subset of $M$ we obtain after cutting off its infinite ends into a compactified version of $M$. In Section \ref{sec:nets}, we provide the required background material on geodesic nets. In particular, we recall from \cite{clnr} that if there are no ``short" geodesic nets on $M$, then a continuous function on the 1-skeleton of a triangulated space can be continuously extended to a map on the entire space. In this context, a net is ``short" if it is no longer than the image under $\psi$ of the 1-skeleton of any simplex. In Section \ref{sec:fillings}, we provide the required background on fillings by chains and the associated geometric inequalities. In Section \ref{sec:extension}, we construct our extended domain, denoted $F(\Gamma)$, by filling each hypersurface of $M$ we previously cut along by a homological chain, and then filling the resulting space by another chain. We use topological methods to prove in Lemma \ref{lemma:no_extension} that $\phi$ cannot have its domain continuously extended to $F(\Gamma)$. This lemma is the backbone of our pseudo-extension proof. Finally, in Section \ref{sec:main}, we prove our main theorem. We assume that there are no ``short" geodesic nets on $M$ and show how to construct the pseudo-extension map $\psi$ using the results of Section \ref{sec:nets}. The resulting contradiction proves that there is in fact a geodesic net with bounded length, proving our claim. 
\par 
The geodesic net we obtain has length bounded by the maximal length of the image under $\psi$ of the 1-skeleton of any simplex in our triangulation of $F(\Gamma)$. Edges will be mapped under $\psi$ to minimizing geodesics, so it will suffice to bound the distance in $M$ between the images of neighbouring vertices in the triangulation. By the triangle inequality, we can bound this distance by bounding the distance between any vertex in $F(\Gamma)$ and its image. Our claimed length bound then results from bounding the geometry of the extended domain $F(\Gamma)$. Since this space is defined using fillings by chains, we will make use of filling inequalities (see Section \ref{sec:fillings}). The exact constant in our bound arises from the constants in these filling inequalities.

\subsection{Locally Convex Ends}
\label{sec:convex}

Our setting is a complete, non-compact Riemannian manifold $M$ with finite volume. We also require that $M$ has locally convex ends, as defined below. 
\begin{defin}[c.f. Definition 1.1 in \cite{clnr}]
    \label{def:ends}
    A complete, non-compact Riemannian manifold $M$ is said to have locally convex ends if for some $p>1$ there exist open subsets $M^\Sigma_i\subset M$, $0\leq i\leq p$, with the following properties.
    \begin{enumerate}
        \item 
        $M\setminus \cup_{i=0}^p M^\Sigma_i$ is a disjoint union of closed hypersurfaces $\Sigma_1,\cdots \Sigma_p$ such that 
        \begin{enumerate}
            \item $M^\Sigma_0$ is bounded and $\partial M^\Sigma_0=\cup_{i=1}^p \Sigma_i$.
            \item For $i\geq 1$, $M^\Sigma_i$ is unbounded and $\partial M^\Sigma_i=\Sigma_i$.
        \end{enumerate}
        \item 
        There exists $\delta>0$ such that for $i\geq 1$, if $x,y\in \overline{M^\Sigma_i}$ with $d(x,y)\leq \delta$, then a minimizing geodesic connecting $x$ and $y$ lies in $\overline{M^\Sigma_i}$.
    \end{enumerate}
\end{defin} 
\noindent The convexity condition on the ends of $M$ allows us to control the behaviour of objects under a length shortening flow. In particular, when we apply a length shortening flow to an immersed multigraph in $M$ (see Section \ref{sec:fillings}), if this graph ever enters one of the locally convex infinite ends, it will remain there for all time.
\par
In our proof, we will utilize an additional method of ``cutting off" the infinite ends of $M$. The hypersurfaces $\Sigma_i$ just described allow us to cut $M$ into a bounded core and some unbounded locally convex regions. However, we have little control over the geometry of the hypersurfaces $\Sigma_i$ or the ends $M^\Sigma_i$. Therefore we will also divide $M$ up by hypersurfaces with arbitrarily small volume, which we will denote by $\Gamma_i$. These hypersurfaces split $M$ into a bounded core and some unbounded regions whose boundaries have arbitrarily small volume, giving us some control on the geometry of the infinite ends.

\begin{lemma}
    \label{lemma:gamma_ends}
    Let $M$ be a complete, non-compact Riemannian manifold $M$ with locally convex ends $M_i^\Sigma$, $1\leq i\leq p$, and finite volume. Then for any $\epsilon>0$, there exists $m>0$ and subsets $\Gamma_i\subset M$, $1\leq i \leq m $, satisfying the following.
    \begin{enumerate}
    \item 
        Each $\Gamma_i$ is a disjoint union of closed hypersurfaces.
        \item 
        Each $\Gamma_i$ has $(n-1)$-volume at most $\epsilon$.
        \item 
        $\cup_{i=1}^m\Gamma_i$ splits $M$ into a disjoint union of submanifolds $M^\Gamma_0,\cdots ,M^\Gamma_m$ such that 
        \begin{enumerate}
            \item $M^\Gamma_0$ is bounded and $\partial M^\Gamma_0=\cup_{i=1}^m \Gamma_i$.
            \item For $i\geq 1$, $M^\Gamma_i$ is unbounded and $\partial M^\Gamma_i=\Gamma_i$.
            \item 
            The bounded core $M^\Gamma_0$ contains the bounded core $M^\Sigma_0$ derived by cutting off the locally convex ends $M_i^\Sigma$.           
            \item 
            Each infinite end $M^\Gamma_i$ is contained by one of the locally convex infinite ends $M_j^\Sigma$.
        \end{enumerate}
        \end{enumerate}
\end{lemma}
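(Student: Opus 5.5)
The plan is to cut each locally convex end $M^\Sigma_j$ ($1\le j\le p$) along a level set of the distance function from $\Sigma_j$, chosen by the co-area formula so that its $(n-1)$-volume is at most $\epsilon$. Fix $j$ and let $f_j(x)=d(x,\Sigma_j)$ on $\overline{M^\Sigma_j}$. This is a $1$-Lipschitz function with $|\nabla f_j|=1$ almost everywhere. Since $\Sigma_j$ is compact and $M^\Sigma_j$ is unbounded, $f_j$ is unbounded on $M^\Sigma_j$, and each sublevel set $\{f_j\le t\}$ is bounded, hence compact by completeness. The co-area formula gives
$$\int_0^\infty \mathcal{H}^{n-1}(f_j^{-1}(t))\,dt=\vol(M^\Sigma_j)\le \vol(M)<\infty.$$
Consequently the set of $t$ with $\mathcal{H}^{n-1}(f_j^{-1}(t))\le \epsilon$ has infinite measure; in particular it contains values $t>0$, as large as we like.

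Distance functions are not smooth, so I would first replace $f_j$ by a smooth proper function $g_j$ on $\overline{M^\Sigma_j}$ with $|g_j-f_j|<1$ and $|\nabla g_j|\ge 1/2$ outside a compact set near $\Sigma_j$, using standard smoothing of Lipschitz functions. The co-area formula then gives
$$\int \mathcal{H}^{n-1}(g_j^{-1}(t))\,dt\le 2\vol(M)+C.$$
Sard's theorem says almost every $t$ is a regular value. So I can choose a regular value $t_j>2$ with $\mathcal{H}^{n-1}(g_j^{-1}(t_j))\le\epsilon$. The level set $g_j^{-1}(t_j)$ is then a closed embedded hypersurface: it is compact by properness and disjoint from $\Sigma_j$ since $f_j>1$ there. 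It may be disconnected, which is exactly why the statement allows each $\Gamma_i$ to be a disjoint union of closed hypersurfaces.

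The components of $\{g_j>t_j\}$ give the infinite ends. The superlevel set $\{g_j>t_j\}\subset M^\Sigma_j$ splits into finitely many components. Its boundary is compact, and each component meets it, because $M^\Sigma_j$ is connected and any path from a component to $\Sigma_j$ must cross $\{g_j=t_j\}$. Bounded components get absorbed into the core. For each unbounded component $U$ I set $M^\Gamma_i=U$ and $\Gamma_i=\partial U$; this is a union of components of $g_j^{-1}(t_j)$, so its volume is still at most $\epsilon$. (If an end $M^\Sigma_j$ were disconnected, I would treat each component separately.) Then I take $M^\Gamma_0=M\setminus\bigcup_i \overline{M^\Gamma_i}$. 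This set is bounded, because it lies in $M^\Sigma_0$ together with the bounded sublevel sets and the bounded components. It contains $M^\Sigma_0$, and its boundary is $\bigcup_i\Gamma_i$. By construction each $M^\Gamma_i$ lies in some $M^\Sigma_j$. Finally, $m$ is the total number of unbounded components, which is finite: each such component contains a whole component of the compact level set.

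The main obstacle I expect is the regularity step: making sure the chosen level set is a genuine smooth closed hypersurface whose volume is controlled by the co-area estimate, and whose unbounded complementary components are finite in number. If smoothing proves awkward, I would instead work with the co-area formula for the Lipschitz function $f_j$ directly, then perturb a good level set to a smooth hypersurface at the cost of a small increase in volume, using $\epsilon/2$ in place of $\epsilon$ to absorb that increase.
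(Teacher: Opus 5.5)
Your architecture matches the paper's. You apply the co-area formula and Sard's theorem to a smoothed proper distance-type function, cut along a regular level of small $(n-1)$-volume, and absorb the bounded complementary components into the core. The one structural difference is that you use $d(\cdot,\Sigma_j)$ separately on each end. This makes disjointness from $\Sigma_j$ and the containment $M^\Gamma_i\subset M^\Sigma_j$ automatic. The paper instead uses a single smoothed $d(p,\cdot)$ on all of $M$ and takes the level beyond a $T$ whose sublevel set contains $M^\Sigma_0$. That choice bounds the volume of all of $\bigcup_i\Gamma_i$ by $\epsilon$ at once; you get $p\epsilon$, which is harmless since $p$ is fixed. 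Your component bookkeeping is more careful than the paper's, which never checks that $m$ is finite. You show that each component of $\{g_j>t_j\}$ has frontier in the level set, so there are finitely many; that $\Gamma_i$ is a union of level-set components; and that $\partial M^\Gamma_0=\bigcup_i\Gamma_i$. One wording fix: an unbounded component has a level-set component in its frontier, not inside it.

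The step that fails as written is the smoothing. You require $|\nabla g_j|\ge 1/2$ outside a compact set and then claim the co-area formula yields $\int\mathcal{H}^{n-1}(g_j^{-1}(t))\,dt\le 2\vol(M)+C$. But co-area gives $\int\mathcal{H}^{n-1}(g_j^{-1}(t))\,dt=\int_{M^\Sigma_j}|\nabla g_j|$. A lower bound on $|\nabla g_j|$ therefore only bounds the left side from below; your estimate needs an upper bound such as $|\nabla g_j|\le 2$.

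Moreover, the lower bound is not available in general. If a proper $g_j$ satisfied $|\nabla g_j|\ge 1/2$ outside a compact set, the flow of $\nabla g_j/|\nabla g_j|^2$ would make the end, beyond some level, diffeomorphic to $N\times[0,\infty)$ with $N$ compact. Local convexity, however, only constrains a neighbourhood of $\Sigma_j$, and a finite-volume locally convex end can have infinite topology (e.g.\ a cusp with infinitely many shrinking handles attached further out).

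The fix is to use the smoothing that really is standard (Greene--Wu, or Gaffney as in the paper): a smooth $g_j$ with $|g_j-f_j|<1$ and $|\nabla g_j|\le 1+\epsilon$. This gives $\int\mathcal{H}^{n-1}(g_j^{-1}(t))\,dt\le(1+\epsilon)\vol(M)$. You get regularity of $t_j$ from Sard, so nothing else in your argument uses a lower gradient bound. With this replacement the proof goes through, and your fallback of perturbing a level set of the Lipschitz $f_j$ (which would need genuine extra work) is unnecessary.
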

\begin{proof}
    We would like to cut off the infinite ends of $M$ by a very large metric sphere. However, we need to ensure that this metric sphere consists of closed submanifolds with small volume. Fix a point $p\in M$.
    By the work of M. P. Gaffney in \cite{gaffney}, for any $\epsilon>0$ there exists a smooth function $\tilde{d}$ that is within $\epsilon$ of the distance function $d(p,\cdot)$ and whose gradient has norm at most $1+\epsilon$. Let $\tilde{S_r}$ be the level set of points satisfying $\tilde{d}(p,\cdot)=r$. 
    Because $M$ has finite volume, by the co-area formula (see e.g. \cite{burago_zalgaller_book} \S 13.4) we have 
    $$
    \infty >(1+\epsilon)\vol(M)\geq \int_M |\operatorname{grad}\tilde{d}(x)|dx =\int_0^\infty \vol(\tilde{S_r})dr.
    $$ 
    Therefore for any $R>0$ and $\delta>0$, the set of values $r>R$ so that $\tilde{S_r}$ has volume at most $\delta$ has positive measure. By Sard's Theorem, $r$ is a regular value of $\tilde{d}$ for all but a measure zero set of values. 
    Because $M_0^\Sigma$ is bounded, there is some $T>0$ so that $\tilde{S_T}$ entirely contains $M_0^\Sigma$. Consider some choice of regular value $r>T$ such that $\tilde{S_r}$ has volume at most $\delta$. Then $\tilde{S_r}$ bounds a bounded region $M^\Gamma_0$ such that $M^\Gamma_0\supseteq M^\Sigma_0$. Define the connected components of $M\setminus \overline{M^\Gamma_0}$ as $M^\Gamma_i$, $1\leq i \leq m$. If any $M_i^\Gamma$ is compact, we redefine $M_0^\Gamma$ as $M_0^\Gamma\cup M_i^\Gamma$ and forget about that particular $M^\Gamma_i$. Then each $M^\Gamma_i$ lies in some $M^\Sigma_j$. We then define $\Gamma_i=\partial M_i^\Gamma\subseteq \tilde{S_r}$, noting that these are (possibly disconnected) hypersurfaces without boundary that have volume at most $\epsilon$. 
    \par
    Lastly, we want to check that the $\Gamma_i$ are closed. Note that $\tilde{S_{r}}$ is the preimage of the (topologically) closed set $\{r\}\subset \bR$, and therefore is (topologically) closed because $\tilde{d}$ is continuous. Moreover, it is contained in the metric ball of radius $r+\epsilon$. Therefore it is (topologically) closed and bounded, and hence is compact by the Hopf-Rinow theorem. Therefore $\tilde{S_r}$ (and hence each $\Gamma_i$) consists of a disjoint union of closed (i.e., compact and without boundary) hypersurfaces.
\end{proof}

\subsection{Geodesic Nets and Cages}
\label{sec:nets}

We now discuss the required background material concerning geodesic nets.
\begin{defin}
    \label{def:net}
    A net is an immersed multigraph in $M$. A geodesic net is a net whose edges are geodesics and such that the unit tangent vectors of every edge incident to a given vertex add up to zero.
    \par 
    A flower is a net with only one vertex.
    A geodesic flower is a geodesic net with only one vertex.
\end{defin}
\noindent
Note that some texts refer to the above objects as \textit{stationary} or \textit{minimal} geodesic nets and do not require geodesic nets to satisfy any condition on their vertices. 
As previously noted, geodesic nets are also the critical points of the length functional over the space of multigraphs.
\par
In order to define our maps $\phi$ and $\psi$, we need the notion of a \textit{weak filling of cages} on a manifold. If a weak filling of cages exists on $M$, then a map from a simplicial complex into $M$ that is only defined only on the 1-skeleton of the complex can be continuously extended to the entire complex. In particular, we will use this technique to define $\phi$ and $\psi$.
To define a weak filling of cages, we need to define a compactified version of $M$,
since the weak filling of cages will be defined on cages in the compactified space $C^\Sigma$ instead of $M$ in order to allow cages to converge to a point ``at infinity". Consider a locally convex infinite end $M_i^\Sigma$ given by cutting $M$ along $\Sigma_i$. Let $C_i^\Sigma$ indicate the one-point compactification of $\overline{M_i^\Sigma}$ (here, $C$ indicates compactification). After compactifying every end of $M$, we obtain a space $M_0^\Sigma\cup_i C_i^\Sigma$ which we denote by $C^\Sigma$. 
We also define the quotient map
$$q:(C^\Sigma,\cup_{i=1}^m C^\Sigma_i)\to (C^\Sigma/\cup_{i=1}^m C^\Sigma_i,*).$$
We are now ready to define a weak filling of cages.
\begin{defin}
    \label{def:cages}
    An $i$-cage is the image of the 1-skeleton of an $i$-simplex in $M$ such that the image of each edge is piecewise geodesic. The set of $k$-cages in $M$ with total length at most $L$ is denoted $\cage_k^L$.
\end{defin}
\begin{defin}[c.f. Definition 3.1 and Definition 3.3 in \cite{clnr}]
    \label{def:filling_cages} A weak filling of $\cage_k^L$ is a collection of (possibly discontinuous) maps 
    $$H_i:\cage^L_i\times[0,1]\to \cage^L_i$$ 
    for $2\leq i\leq k$ with the following properties.
    \begin{enumerate}
        \item 
        $H_i(\cdot,0)$ is the identity map and $H_i(\cdot,1)$ maps every cage to a constant cage.
        \item 
        $H_i(\cdot,t)$ fixes the set of constant cages pointwise for every $t\in[0,1]$.
        \item 
        If an $i$-cage lies in a locally convex subset, this remains true after applying $H_i(\cdot,t)$ for any $t\in[0,1)$.
        \item 
        The image under $H_i(\cdot,t)$ of a flower with at most $j$ edges is also a flower with at most $j$ edges for any $t\in[0,1]$. 
        \item 
        We can define a map $\tilde{q}$ from the space of cages in $M$ to the space of cages in $ C^\Sigma/\cup_{i=1}^m C^\Sigma_i$ by applying $q$ to the image of each cage. Then each map $\tilde{q}\circ H_i(\cdot, t)$ is continuous for all $t\in[0,1]$.\qed
    \end{enumerate}
\end{defin}

Weak fillings have the following crucial properties which allow us to use them to extend a map defined only on the 1-skeleton of a simplicial complex.

\begin{lemma}[Proposition 3.4 in \cite{clnr}]
    \label{lemma:filling}
    Suppose $\{H_i\}_{i=2}^k$ is a weak filling of $\cage_k^L$. Let $\Theta\in \cage_i^L$, $i\leq k$. Then the following properties hold.
    \begin{enumerate}
        \item 
        There is a map $g_\Theta$ of the $i$-simplex into $C^\Sigma$ that extends $\Theta$ such that $q\circ g_\Theta$ is a continuous map that depends continuously on $\Theta$.
        \item 
        If $\Theta'$ is a subcage of $\Theta$ representing a face of the $i$-simplex, then $g_{\Theta'}$ equals the restriction of $g_\Theta$ to that face.
        \item 
        If $\Theta$ lies in a locally convex subset, then so does the image of $g_\Theta$. \qed
    \end{enumerate}
\end{lemma}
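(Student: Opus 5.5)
The plan is to build $g_\Theta$ by induction on the dimension $i$ of the simplex, using the homotopies $H_i$ to cone off each cage. The crude "cone" construction is: parametrize the $i$-simplex $\Delta^i$ by radial coordinates $(x,t)$, with $x\in\partial\Delta^i$ and $t\in[0,1]$, where $t=0$ is the boundary and $t=1$ is the barycenter. We would like to set $g_\Theta(x,t)$ equal to the point of the cage $H_i(\Theta,t)$ corresponding to $x$. This is only defined on the 1-skeleton directions, however. So the actual construction has to proceed by filling faces first.

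\textbf{Step 1: base case $i=1$.} For an edge, $g_\Theta$ is just the piecewise geodesic parametrization of $\Theta$.

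\textbf{Step 2: inductive step.} For $i\geq 2$, assume every subcage $\Theta'$ corresponding to a proper face has already been given an extension $g_{\Theta'}$ satisfying properties 1--3. Apply $H_i(\Theta,t)$ and, at each time $t$, use the inductively defined $g_{\Theta'}$ on the faces of the cage $H_i(\Theta,t)$. This gives maps $G_t:\partial\Delta^i\to C^\Sigma$. Then define
$$g_\Theta(x,t)=G_t(x)\ \text{for }t<1, \qquad g_\Theta(\text{barycenter})=\text{the constant value of }H_i(\Theta,1).$$
Property 2 then holds by construction: the restriction to $t=0$ gives $G_0=g$ on the faces of $\Theta$ itself, because $H_i(\cdot,0)$ is the identity. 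To make this consistent we must define $g_{\Theta'}$ canonically from $\Theta'$ alone, so that it is independent of which ambient simplex contains it. Constant cages are fixed by $H$, which keeps degenerate faces consistent.

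\textbf{Step 3: property 3.} Locality follows from condition 3 of Definition \ref{def:filling_cages}. Cages in a locally convex subset stay there for $t<1$, so the image stays there as well. Since the subset is closed in $C^\Sigma$, the limit point at $t=1$ also lies in it.

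\textbf{Main obstacle: continuity.} The $H_i$ may be discontinuous, and only $\tilde q\circ H_i(\cdot,t)$ is continuous (condition 5). This is exactly why the lemma asserts continuity only for $q\circ g_\Theta$. The plan is to check continuity of $q\circ g_\Theta$ in the joint variables $(x,t,\Theta)$ by induction, using the continuity of $\tilde q\circ H_i$ in $\Theta$ and the inductive continuity of $q\circ g_{\Theta'}$ in $\Theta'$. Two further points need care:
\begin{itemize}
\item continuity in $t$ jointly, which I would first try to obtain by reparametrizing or assuming $H_i$ is jointly continuous after composing with $\tilde q$;
\item continuity at the barycenter as $t\to 1$, where one must show the cages $H_i(\Theta,t)$ shrink to the constant cage in the topology after applying $\tilde q$.
\end{itemize}
If joint continuity in $t$ fails as stated, I would insert a subdivision of $[0,1]$ and interpolate. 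Since this result is Proposition 3.4 of \cite{clnr}, in the end I would defer these technical continuity details to that reference.
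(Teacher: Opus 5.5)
Your construction is the same one the paper sketches right after the statement: induction on $i$, with $\Delta^i$ filled by concentric copies of $\partial\Delta^i$, on which the faces of $H_i(\Theta,t)$ are filled by the lower-dimensional extensions. The paper likewise defers the details to Proposition~3.4 of \cite{clnr}, and your Property~2 argument is fine. But the two places where you commit to an argument do not go through as written.

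\textbf{Continuity.} Your plan for continuity does not work as stated. You must control $(\Theta,t,x)\mapsto q\bigl(g_{\Theta'}(x)\bigr)$, where $\Theta'$ is the face of $H_i(\Theta,t)$ containing $x$. Continuity of $\tilde q\circ H_i$ does not compose with continuity of $\Theta'\mapsto q\circ g_{\Theta'}$. The cage fed to $g$ is $H_i(\Theta,t)$ itself, which can jump, while the inductive hypothesis gives continuity in the cage, not in its $\tilde q$-image. The missing ingredient is that $q\circ g_{\Theta'}$ is blind to these jumps. For the flow-based filling behind Theorem~\ref{theorem:filling_no_cages}, the jump is the one described at the end of Section~\ref{sec:nets}. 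Flowers far out in an end go to the point at infinity, while nearby ones are flowed but, by Condition~3 of Definition~\ref{def:filling_cages}, stay in the same $\overline{M_i^\Sigma}$. Property~3 at level $i-1$ then gives $q\circ g\equiv *$ on both sides of the jump. So Property~3 must be carried through the same induction as continuity, not checked afterwards.

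Also, Condition~5 as phrased gives continuity of $\tilde q\circ H_i(\cdot,t)$ only for each fixed $t$. You need joint continuity in $(\Theta,t)$, which the flow provides. Subdividing $[0,1]$ and interpolating cannot produce it: the interpolating paths of cages would have to lie in $\cage_i^L$, vary continuously with $\Theta$ and respect faces, which is exactly what a filling is meant to supply.

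\textbf{Property~3 at the barycenter.} Condition~3 only covers $t<1$. Your closure argument takes a limit of $g_\Theta(x,t)$ in $C^\Sigma$ as $t\to 1$, which needs continuity of $g_\Theta$ itself, but only $q\circ g_\Theta$ is continuous. Because $q$ collapses $\bigcup_j C_j^\Sigma$ to a single point, the limit argument only places the image of the barycenter in $q^{-1}(q(S))$. For $S\subseteq\overline{M_i^\Sigma}$ this is all of $\bigcup_j C_j^\Sigma$, not $S$ or even $C_i^\Sigma$. That weaker conclusion is all the main theorem uses, since it only needs $\psi$ to be a map of pairs. For Property~3 as stated, however, you need the endpoint $H_i(\Theta,1)$ itself to lie in the closure of $S$ in $C^\Sigma$. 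This holds for the flow: a flower trapped in $\overline{M_i^\Sigma}$ either converges to a point of $\overline{M_i^\Sigma}$ or has already been sent to the point at infinity of $C_i^\Sigma$. It does not follow from Conditions~1--5 of Definition~\ref{def:filling_cages}.
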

\noindent
Roughly speaking, we extend an $i$-cage $\Theta$ to the associated $i$-simplex by induction on $i$. As base case, suppose that $\Theta$ is a 2-cage bounding a 2-simplex. By applying the weak filling map to $\Theta$, we contract this cage to a point. Since the 2-simplex is a disk, we can view it as a 1-parameter family of 2-cages $\Theta_t$ for $t\in[0,1]$, with $\Theta_0=\Theta$ and $\Theta_1$ a point. Then we can define the image of $\Theta_t$ as $H_2(\Theta,t)$, defining our extension map. If $\Theta$ is an $i$-cage for $i>2$, suppose that we are able to extend all cages of dimension at most $i-1$. Then at each time $t\in[0,1]$, we can extend each subcage of $H_i(\Theta,t)$ corresponding to an $(i-1)$-face of the $i$-simplex associated to $\Theta$. This gives us a 1-parameter family of $(i-1)$-dimensional spheres that fills the $i$-simplex, defining our map.
\par
In the interest of being able to apply the above lemma to define our pseudo-extension map $\psi$, we must first ensure that a weak filling exists. This is the purpose of the following theorem.

\begin{theorem}[Lemma 3.10 and Theorem 3.11 in \cite{clnr}]
\label{theorem:filling_no_cages}
    Suppose $M$ is a complete non-compact $n$-dimensional Riemannian manifold with locally convex ends. 
    Given any $2\leq k \leq n+1$, if $M$ does not admit a non-trivial geodesic flower with at most ${ k+1\choose 2}$ edges and total length at most $L$, then there exists a weak filling of $\cage_k^L$. \qed
\end{theorem}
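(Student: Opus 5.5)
The plan is to build the maps $H_i$ by a length-shortening flow on cages, following \cite{clnr}, and to argue that the only way this flow can fail to contract a cage is by converging to a short geodesic flower, which is excluded by hypothesis. We work with $k$-cages of total length at most $L$.

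First we reduce each cage to a flower. Given $\Theta\in\cage_i^L$, we contract it at constant speed along the edges of a spanning tree, say the star of the first vertex. This merges all vertices into one and yields a flower with at most ${i+1\choose 2}-i={i\choose 2}$ loops. We then run a Birkhoff-type curve-shortening process on this flower, treating it as a one-vertex multigraph. At each step we replace short arcs by minimizing geodesics, with arcs of length less than the convexity scale $\delta$ of Definition \ref{def:ends}, and we move the vertex to decrease total length.

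We next verify the properties in Definition \ref{def:filling_cages}. Length is nonincreasing, so the process stays in $\cage_i^L$. The edge count of a flower never increases, which gives property 4. Constant cages are fixed, which gives property 2. Because minimizing geodesics of length at most $\delta$ between points of $\overline{M^\Sigma_j}$ stay in $\overline{M^\Sigma_j}$, a cage inside a locally convex end stays there, which gives property 3.

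The key dichotomy is that, along the process, either the total length tends to zero, so the flower shrinks to a point and we reparametrize so that $H_i(\cdot,1)$ is constant, or the flower escapes to infinity, or a subsequence converges to a stationary flower. A limiting stationary flower is a geodesic flower with at most ${k+1\choose 2}$ edges and length at most $L$. It is non-trivial since its length is bounded below, and the hypothesis excludes it. A flower that escapes into an end $M^\Sigma_j$ is sent by $q$ to the point $*$. Thus we set $H_i(\Theta,1)$ to be the constant cage at the limit point, or at infinity, and $\tilde q\circ H_i(\cdot,t)$ remains controlled.

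The main obstacle is property 5, the continuity of $\tilde q\circ H_i(\cdot,t)$ in $\Theta$, together with a uniform time reparametrization. The shortening flow's convergence time depends on $\Theta$ and is not continuous, and the maps are allowed to be discontinuous in $M$ only because of escape to infinity. To handle this, we first use compactness of cages of length at most $L$ meeting the bounded core $M^\Sigma_0$, together with the absence of short geodesic flowers, to get a uniform bound on the number of shortening steps needed. Concretely, there is an $\eta>0$ such that each step decreases length by at least $\eta$ unless the cage is already tiny or lies entirely in an end. We would then patch local contractions by a partition of unity in the space of cages, so that discontinuities occur only among cages mapped by $q$ to $*$.
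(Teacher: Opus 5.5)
Your overall route is the one the paper sketches. You collapse the cage to a flower, run Birkhoff on the petals while pushing the vertex along the sum of the unit tangent vectors, use compactness and the absence of geodesic flowers of length at most $L$ to get a uniform running time, and let $q$ absorb what happens in the ends. The gap is in property 5, which you rightly single out. It has two parts.

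First, your Birkhoff step is not a continuous map as you set it up. The constant $\delta$ of Definition~\ref{def:ends} only guarantees that \emph{a} minimizing geodesic between nearby points of an end stays in that end. It does not make minimizing geodesics unique, and uniqueness is what makes a Birkhoff step depend continuously on the flower. You cannot fix this by shrinking the scale globally either, since the injectivity radius may tend to $0$ along an end (a cusp of a finite-volume hyperbolic manifold is a locally convex end).

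Second, your proposed remedy, patching local contractions with a partition of unity, does not work. Local contractions are homotopies of cages in $M$, and they cannot be averaged: you can average tangent vectors, as in a pseudo-gradient construction, but not paths in a nonlinear space. Gluing two local contractions over an overlap requires a homotopy between them, i.e.\ a filling one dimension up, which is exactly the kind of object you are trying to build.

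What the paper does instead makes patching unnecessary. Let $N$ be the $2L$-neighbourhood of $M_0^\Sigma$.
\begin{itemize}
\item A flower disjoint from $N$ is never flowed: $H_k(\cdot,t)$ sends it to the point at infinity of its end for every $t>0$. You can apply the same rule at whatever step a flowed flower leaves $N$.
\item A flower meeting $N$ lies in a fixed compact set. There you can take the Birkhoff subdivision below the injectivity radius, so each step is continuous.
\item On that compact set your $\eta$-argument becomes a genuine compactness statement. It yields a uniform number of steps after which the flower lies in a small convex ball and is shrunk to a point, so a single global time rescaling makes $H_k(\cdot,1)$ constant.
\end{itemize}
The jump between the two regimes is invisible after $q$. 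A flower of length at most $L$ that is close to one missing $N$ stays at distance nearly $2L$ from $M_0^\Sigma$. So it lies, together with all nearby flowers, in a single end $\overline{M_j^\Sigma}$. By local convexity (your property 3) their whole trajectories remain there, so $\tilde q$ sends all of them to the constant cage at $*$ at every time. In particular, you never need to follow a flower far out in an end or wait for it to escape.
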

\noindent Note that ${ k+1\choose 2}$ is the number of edges of a $k$-cage.
This result holds because it is possible to construct a weak filling using a ``net shortening flow" per the following argument from \cite{clnr}. First, note that $\cage^L_k$ can be flowed to the subset of flowers in $\cage^L_k$ by the following process. For each net, fix a ``base" vertex and continuously move each other vertex to this base vertex along the edge connecting the two vertices. The edges are then redefined continuously so that the cage has the same image at each point in time. Therefore it is enough to define the filling maps $H_i$ on all flowers in the space $\cage_i^L$. 
\par
\begin{figure}
    \centering
    \includegraphics[width=0.7\textwidth]{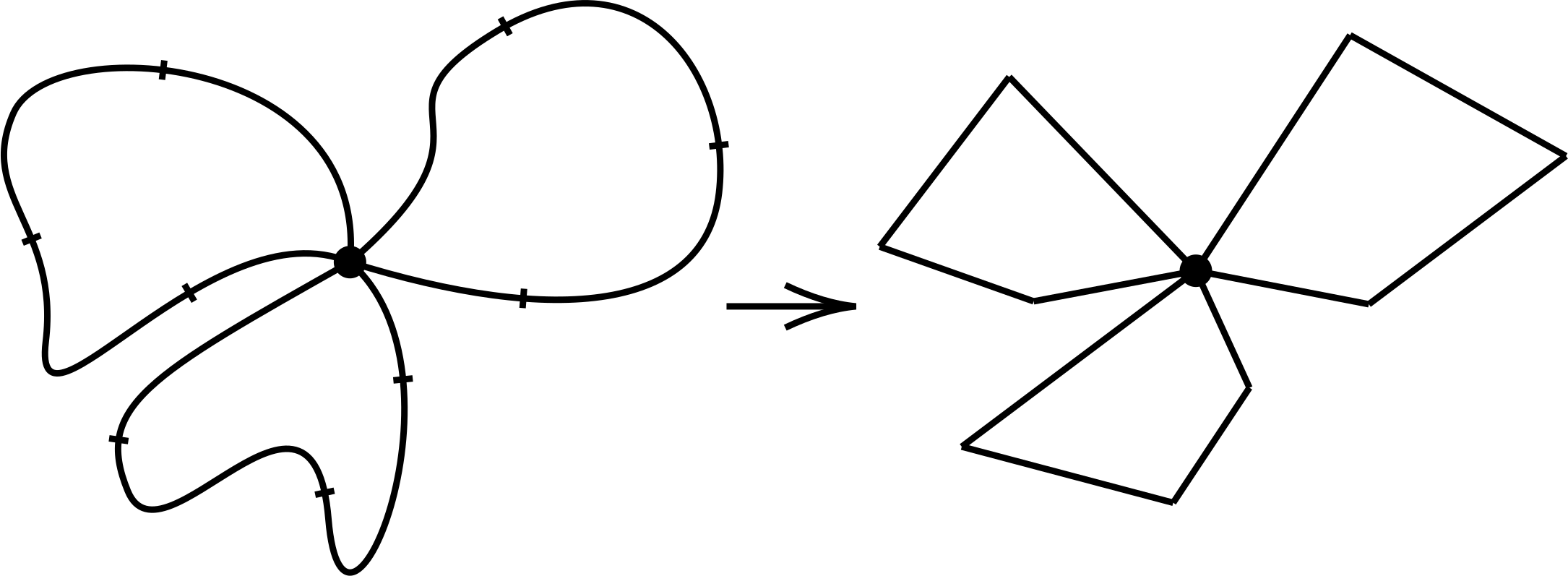}
    \caption{Shortening the edges of a flower using Birkhoff's process.}
    \label{fig:birkhoff}
\end{figure}
As a first step in defining $H_i$, we define a length shortening flow on a flower by fixing its central vertex and applying the Birkhoff curve shortening process to each edge individually. This amounts to picking a sequence of $k$ evenly spaced points along each edge, with the endpoints of the edge as the first and last points in the sequence, and then connecting each adjacent pair of points by a minimizing geodesic (see Figure \ref{fig:birkhoff}). The value of $k$ should be chosen large enough that each pair of consecutive points is connected by a unique minimizing geodesic. Under this process, every flower converges to a flower whose edges are geodesics. However, the stationarity condition at the vertex will not necessarily be satisfied, so these flowers are not necessarily geodesic flowers. Therefore, we then move the flower's vertex a small distance in the direction of the sum of the outward-pointing unit tangent vectors of each edge incident to the vertex (see Figure \ref{fig:tangent}). Note that this step does not increase the flower's total length. After repeating these two steps continually, the flower will converge to either a point or a geodesic flower.

\begin{figure}
    \centering
    \includegraphics[width=\textwidth]{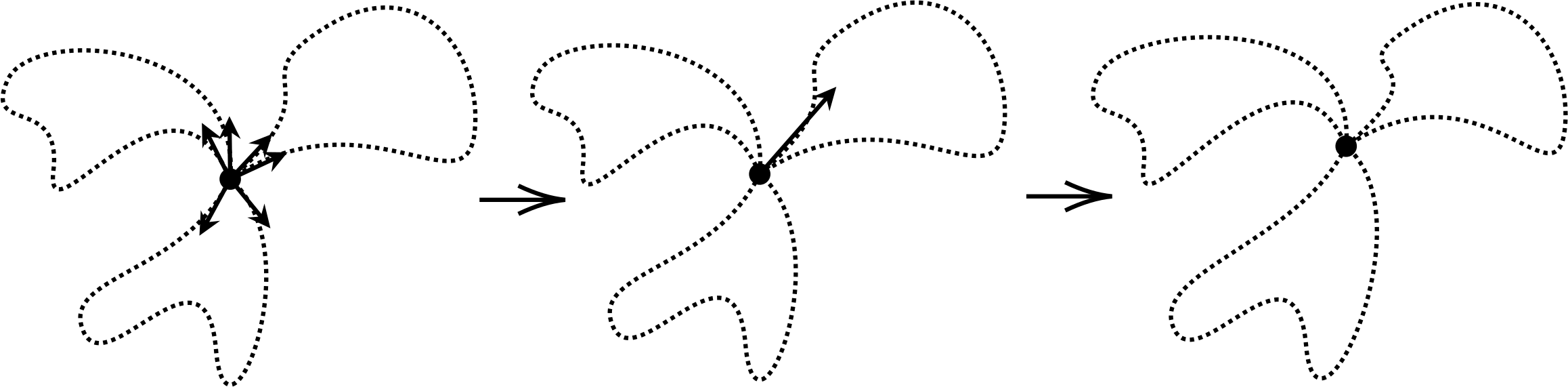}
    \caption{Ensuring that the stationarity condition holds.}
    \label{fig:tangent}
\end{figure}
\begin{figure}
    \centering
    \includegraphics[width=0.5\textwidth]{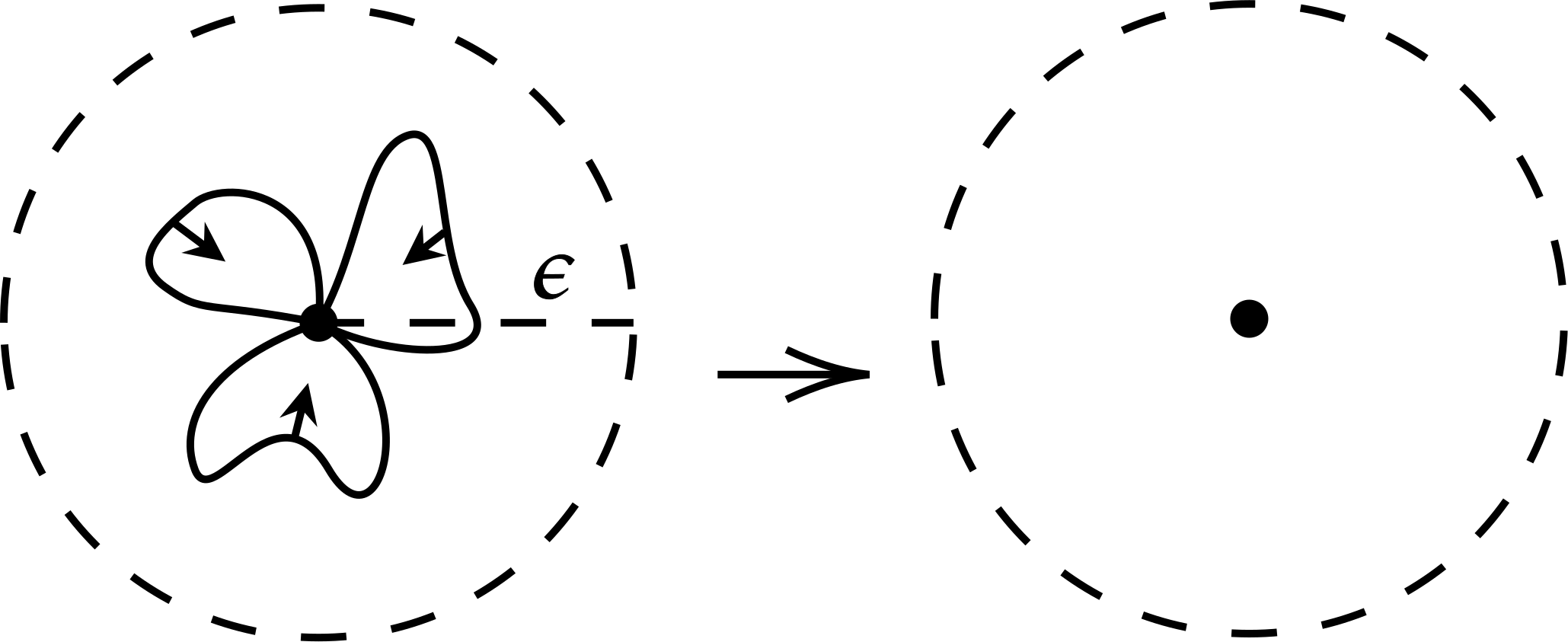}
    \caption{Contracting a small net to a point.}
    \label{fig:tiny_net}
\end{figure}

\par 
We build upon this procedure to define our weak filling on flowers in $\cage^L_{k}$ as follows. Flowers that lie entirely outside of a $2L$-neighbourhood of the bounded core $M_0^\Sigma$, and hence entirely inside one of the locally convex ends $M_i^\Sigma$, will be mapped by $H_k(\cdot, t)$ to the point at infinity in $C_i^\Sigma$ for $t\in(0,1]$. Consider instead a flower that intersects the $2L$-neighbourhood of $M_0^\Sigma$. Let $r_{inj}$ be the injectivity radius of this neighbourhood. A flower whose image lies within $r_{inj}$ of its vertex can be flowed to a point by applying the standard Birkhoff process to its edges (see Figure \ref{fig:tiny_net}). This is because no $r_{inj}$-ball in this subset completely contains a geodesic flower. If the flower does not lie within $r_{inj}$ of its vertex, then we apply our above shortening procedure to the flower until it either converges to a point curve or a geodesic flower. If we assume that the $2L$-neighbourhood of $M_0^\Sigma$ does not admit a geodesic flower of length at most $L$, then all flowers in this region must converge to point curves and will do so in finite time. We can therefore scale time so that all such flowers converge in unit time. We then define the weak filling map $H_k(\cdot,t)$ on a flower in $\cage_{k}^L$ by flowing it for time $t$ as described above.
Note that the maps $H_k$ are not continuous themselves. This is because all flowers that lie outside the $2L$-neighbourhood of $M_0^\Sigma$ are mapped by $H_k(\cdot,t)$ to a point at infinity for all $t>0$. However, this discontinuity is resolved by applying $q$.

\subsection{Filling by Chains\texorpdfstring{ in $L^\infty(M)$}{}}
\label{sec:fillings}

We would like to (attempt to) extend the inclusion map $\phi: \overline{M^\Gamma_0}\into C^\Sigma$ to a larger domain. However, in our for our pseudo-extension proof to work we need to find a domain it is \textit{impossible} to extend $\phi$ to. Our inspiration is the identity map $S^n$, whose domain cannot be extended continuously to the solid ball $B^n$. Following this analogy, we want our extended domain to be an $(n+1)$-dimensional space whose boundary is, roughly speaking, $\overline{M^\Gamma_0}$. Of course, $\overline{M^\Gamma_0}$ is not a closed manifold, so we will first fill each boundary component by a compact space. 
In order to accomplish this construction, we need the following definitions per Gromov in \cite{gromov_filling}.
\par
Consider a Riemannian manifold $N$. It is possible to embed $N$ inside $L^\infty(N)$, the space of bounded Borel functions on $N$. This is done via the Kuratowski embedding, which maps every point $x\in N$ to the function $d_x:y\mapsto d(x,y)$. The space $L^\infty(N)$ comes equipped with the sup norm $d_\infty$. This embedding is isometric in the sense that $d(x,y)=d_\infty(d_x,d_y)$ for any $x,y\in N$. Viewing $N$ in this larger space, we can consider singular $(n+1)$-chains that fill it-- that is, whose boundary represents the fundamental class of $N$. This is how we will construct the domain of our extended map. 
\par 
We make the following geometric definitions.
\begin{defin}
    \label{def:fillrad}
    Let $z$ be an $n$-dimensional cycle in $L^\infty(N)$. We define the filling radius of $z$ by
    \begin{align*}
        \fillrad(z) = \inf_{\epsilon>0} \{ z\text{ is a boundary in its }\epsilon\text{-neighbourhood in } L^\infty(N).\}
    \end{align*}
\end{defin}

\begin{defin}
    \label{def:fillvol}
    Let $z$ be an $n$-dimensional cycle in $L^\infty(N)$. We define the filling volume of $z$ by
    \begin{align*}
        \fillvol(z) = \inf_{\partial w = z} \vol w,
    \end{align*}
    where the volume of a simplex $\sigma:\Delta^{k}\to L^\infty(N)$ in a chain is defined as the smallest possible volume of $\Delta^{k}$ with respect to a metric on $\Delta^{k}$ that makes $\sigma$ distance-decreasing.
\end{defin}
These two quantities obey the following inequalities. 
Gromov's Theorem 4.2.B of \cite{gromov_filling} states that any piecewise smooth 
$m$-dimensional cycle $z$ in $L^\infty(N)$ satisfies
\begin{align}
\label{eqn:fillvol}
    \fillvol(z)
    < m!\sqrt{(m+1)!}\vol_{m}(z)^{(m+1)/m}.
\end{align}
Second, Theorem 4.3.B. in \cite{gromov_filling} states that any
$m$-dimensional cycle $z$ in $L^\infty(N)$ satisfies
$$
\fillrad(z) < (m+1)m^m\sqrt{m!}\vol(z)^{1/m}.
$$
Gromov's work was built upon by several authors, such as Guth \cite{guth_torus, guth_balls, guth_urysohn}, P. Papasoglu \cite{papasoglu}, and S. Wegner \cite{wegner}.
In Theorem 1.2 of Nabutovsky's work \cite{nabutovsky2022}, the filling radius inequality is improved for any closed Riemannian manifold $N$ of dimension $n$ to
\begin{align}
\label{eqn:fillrad}
    \fillrad(N)\leq \frac{n}{2}\vol_{n}(N)^{1/{n}},
\end{align}
where $\fillrad(N)$ is to be understood as the smallest possible filling radius of a cycle realizing the fundamental class of $N$. In fact, the proof of this inequality shows that it is also true for $n$-dimensional cycles in $L^\infty(N)$.
In the following section, we will utilize these inequalities to control the geometry of our extended domain.

\subsection{Pseudo-Extension Lemma}
\label{sec:extension}

We now have the required background to outline our pseudo-extension proof. Our initial domain will be the closure of the bounded core we obtain after cutting off the infinite ends of $M$ along the small-volume hypersurfaces $\Gamma_i$. 
Our initial map $\phi$ is (up to a small perturbation which we describe in the following section) the inclusion map of $\overline{M_0^\Gamma}$ into $C^\Sigma$.
To define the domain of our extended map, we would like to fill each $\Gamma_i$ by a chain in $L^\infty(M)$ with small volume. This will ensure that the space constructed by gluing these fillings to $M_0^\Gamma$ has small volume and hence a small filling radius by Inequality \ref{eqn:fillrad}, which will ultimately allow us to bound the total length of the geodesic net we obtain in Section \ref{sec:main}. By Inequality \ref{eqn:fillrad}, for each $\Gamma_i$ we have 
    \begin{align*}
        \fillrad(\Gamma_i)\leq \frac{n-1}{2}\vol_{n-1}(\Gamma_i)^{1/{n-1}}.
    \end{align*}
By Inequality \ref{eqn:fillvol}, we have
\begin{align*}
    \fillvol(\Gamma_i)
    < (n-1)!\sqrt{n!}\vol_{n-1}(\Gamma_i)^{n/(n-1)}.
\end{align*}
Recall that we can choose the $\Gamma_i$ so that $\vol_{n-1}(\Gamma_i)$ is arbitrarily small.
Therefore, given any $\epsilon>0$ we can choose $\Gamma_i$ so that both $\fillrad(\Gamma_i)$ and $\fillvol(\Gamma_i)$ are smaller than $\epsilon$. The same is also true of each connected component of $\Gamma_i$. Therefore it is possible to find a singular chain $F(\Gamma_i)$ in $L^\infty(M)$ that is a disjoint union of chains that fill each connected component of $\Gamma_i$, that has volume at most $\epsilon$, and that lies within an $\epsilon$-neighbourhood of $\Gamma_i$. Note that we have already established that each component of $\Gamma_i$ is a closed submanifold, and hence can be filled.
\par 
Next, we view the fillings of each $\Gamma_i$ as being glued to $M^\Gamma_0$ along $\Gamma_i$, forming the space $\Gamma=M^\Gamma_0\cup_{i=1}^m F(\Gamma_i)\subset L^\infty(M)$. We take a cycle representing the fundamental class of $M^\Gamma_0$. Since the boundary of the fillings $F(\Gamma_i)$ sum to equal the boundary of this cycle, we can combine them to form a new $n$-dimensional cycle. This space can itself by filled by a singular chain. Using Inequality \ref{eqn:fillrad}, we have
\begin{align*}
    \fillrad(\Gamma)
    &\leq \frac{n}{2}\vol_n(\Gamma)^{1/n}\\
    &\leq \frac{n}{2}\left(\vol_n(M_0^\Gamma) + \sum_{i=1}^m \vol_n (F(\Gamma_i))\right)^{1/n}\\
    &\leq \frac{n}{2}
    \left(\vol_n(M)+m\epsilon
    \right)^{1/n}.
\end{align*}
By redefining $\epsilon$, we can thus choose a filling $F(\Gamma)$ of $\Gamma$ that lies within distance $(n/2)(\vol_n(M))^{1/n}+\epsilon$ of $\Gamma$. 
\par 
In summary, we have defined the following spaces:
\begin{itemize}
    \item 
    $M^\Sigma_i$, $i\geq 1$, are locally-convex infinite ends of $M$, with $\partial M^\Sigma_i=\Sigma_i$ and the respective bounded core $M^\Sigma_0$.
    \item
    $M^\Gamma_i$, $i\geq 1$, are infinite ends of $M$ cut off by hypersurfaces $\Gamma_i$ with small volume, and the respective bounded core $M^\Gamma_0$.
    \item 
    $C^\Sigma_i$, $i\geq 1$, is the one-point compactification of $\overline{M_i^\Sigma}$, and $C^\Sigma = M_0^\Sigma\cup_iC^\Sigma_i$.
    \item 
    $F(\Gamma_i)$ is a union of small chains that fill each connected component of $\Gamma_i$.
    \item 
    $\Gamma=M^\Gamma_0\cup_{i=1}^m F(\Gamma_i)$
    \item 
    $F(\Gamma)$ is a small chain filling $\Gamma$.
\end{itemize}
\par 
We are now ready to prove our main lemma describing the impossible extension of $\phi$.
First, we define the inclusion
$$j:(\overline{M^\Gamma_0},\cup_{i=1}^m \Gamma_i)\into (F(\Gamma),\cup_{i=1}^m F(\Gamma_i)).$$ 
Recall that we have also previously defined the quotient map
$$q:(C^\Sigma,\cup_{i=1}^p C_i^\Sigma)\to C^\Sigma/\cup_{i=1}^p C_i^\Sigma.$$
Consider a map $\phi:(\overline{M^\Gamma_0},\cup_{i=1}^m \Gamma_i)\to (C^\Sigma,C^\Sigma_i)$ (not necessarily homotopic to the inclusion map, although that is how we will ultimately apply this lemma). We will prove that if $q\circ \phi$ is continuous and induces a non-zero map on $n$-dimensional homology in $\bZ_2$ co-efficients, then $\phi$ cannot have its domain extended to $(F(\Gamma),\cup_{i=1}^m F(\Gamma_i))$. We only require that $q\circ \phi$ is continuous, not $\phi$ itself, since we will eventually apply this lemma to a map $\phi$ defined using a weak filling of cages (and hence we can only guarantee that $q\circ \phi$ will  be continuous as per the fifth property in Definition \ref{def:filling_cages}).
Figure \ref{fig:map_schematic} provides a diagram of the spaces and maps in question.

\begin{lemma}[c.f. Lemma 2.1 of \cite{clnr}]
    \label{lemma:no_extension}
    Consider a map $$\phi:(\overline{M^\Gamma_0},\cup_{i=1}^m \Gamma_i)\to (C^\Sigma,\cup_{i=1}^p C_i^\Sigma)$$
    such that $q\circ \phi$ is continuous and $(q\circ \phi)_*$ is a non-zero map on $n$-dimensional homology in $\bZ_2$-coefficients.
    Then there is no map $$\psi:(F(\Gamma),\cup_{i=1}^m F(\Gamma_i))\to (C^\Sigma,\cup_{i=1}^p C_i^\Sigma)$$ such that $\phi=\psi\circ j$ and such that $q\circ \psi$ is continuous.
\end{lemma}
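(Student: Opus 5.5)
We argue by contradiction using relative homology in $\bZ_2$-coefficients and naturality of the long exact sequences of pairs. Suppose such a $\psi$ exists with $\phi=\psi\circ j$ and $q\circ\psi$ continuous. Then $q\circ\phi=(q\circ\psi)\circ j$ is a factorization through continuous maps of pairs. We pass to the quotient pair: $q$ sends $\cup_{i=1}^p C_i^\Sigma$ to the basepoint $*$, so $q\circ\phi$ and $q\circ\psi$ are continuous maps of pairs
$$(\overline{M^\Gamma_0},\cup_i\Gamma_i)\to (C^\Sigma/\cup_i C^\Sigma_i,*),\qquad (F(\Gamma),\cup_i F(\Gamma_i))\to (C^\Sigma/\cup_i C^\Sigma_i,*).$$
On $H_n(\cdot;\bZ_2)$ this gives $(q\circ\phi)_*=(q\circ\psi)_*\circ j_*$. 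The hypothesis that $(q\circ\phi)_*$ is nonzero I read as nonzero on the relative class $[\overline{M^\Gamma_0},\partial]$, i.e.\ on the relative fundamental class of the compact manifold with boundary $\overline{M^\Gamma_0}$. In degree $n$, relative homology of the pair $(X,*)$ agrees with reduced homology of $X$, so there is no ambiguity on the target side.

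It therefore suffices to show that $j_*[\overline{M^\Gamma_0},\cup_i\Gamma_i]=0$ in $H_n(F(\Gamma),\cup_i F(\Gamma_i);\bZ_2)$. We compute this with the chains built in Section \ref{sec:extension}. Let $c$ be a relative fundamental chain of $\overline{M^\Gamma_0}$ with $\partial c=\sum_i[\Gamma_i]$. Each $F(\Gamma_i)$ is a chain with $\partial F(\Gamma_i)=[\Gamma_i]$, so the cycle representing $\Gamma$ is $z=c+\sum_i F(\Gamma_i)$ (mod 2). The filling satisfies $\partial F(\Gamma)=z$. We interpret $F(\Gamma)$ and $F(\Gamma_i)$ as the spaces given by the images of these singular chains in $L^\infty(M)$. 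In the relative chain complex $C_*(F(\Gamma))/C_*(\cup_i F(\Gamma_i))$ we have $z\equiv c$, because each $F(\Gamma_i)$ is a chain supported in $\cup_i F(\Gamma_i)$. Hence $j_*[c]=[z]=[\partial F(\Gamma)]=0$ in relative homology. Naturality then gives $(q\circ\phi)_*[c]=(q\circ\psi)_*(0)=0$, contradicting the hypothesis.

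The main obstacle is bookkeeping rather than topology, and it has two parts. First, we must make sure the domain pairs are genuine topological spaces, namely the images of the singular chains, so that $q\circ\psi$ being continuous makes sense and functoriality of singular homology applies. Second, we must check that the nonvanishing hypothesis really refers to the relative fundamental class. If instead it is phrased via absolute homology of $C^\Sigma/\cup C^\Sigma_i$, we would first pass through the map of pairs $(C^\Sigma/\cup C_i^\Sigma,\emptyset)\to(C^\Sigma/\cup C_i^\Sigma,*)$, which is an isomorphism in degree $n\ge1$. Everything is then a diagram chase in the commuting square formed by $j$, $q\circ\phi$ and $q\circ\psi$.
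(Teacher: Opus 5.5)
Your proposal is correct and follows the paper's proof. The paper also argues by contradiction from $(q\circ\phi)_*=(q\circ\psi)_*\circ j_*$, asserting $j_*=0$ in degree $n$ because $\partial F(\Gamma)=\Gamma=M_0^\Gamma\cup_i F(\Gamma_i)$; your relative-chain computation $z\equiv c$ modulo chains supported in $\cup_i F(\Gamma_i)$ just spells out that one-line claim for the relative fundamental class, which is exactly the class on which nonvanishing is checked when the lemma is applied in Section~\ref{sec:main}.
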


\begin{proof}
    Suppose such a map $\psi$ exists.  
    Note that $j_*=0$ in $n$-dimensional homology in $\bZ_2$ coefficients, because $\partial (F(\Gamma))=\Gamma=M_0^\Gamma \cup_{i=1}^m F(\Gamma_i)$.
    Therefore $(q\circ\phi)_*=(q\circ\psi)_*\circ j_*=0$ in dimension $n$. 
    This contradicts our assumption that $(q\circ \phi)_*$ is a non-zero map on $n$-dimensional homology in $\bZ_2$-coefficients. Therefore there can be no such map $\psi$.
\end{proof}

\begin{figure}[ht]
    \centering
    \includegraphics[width=1\textwidth]{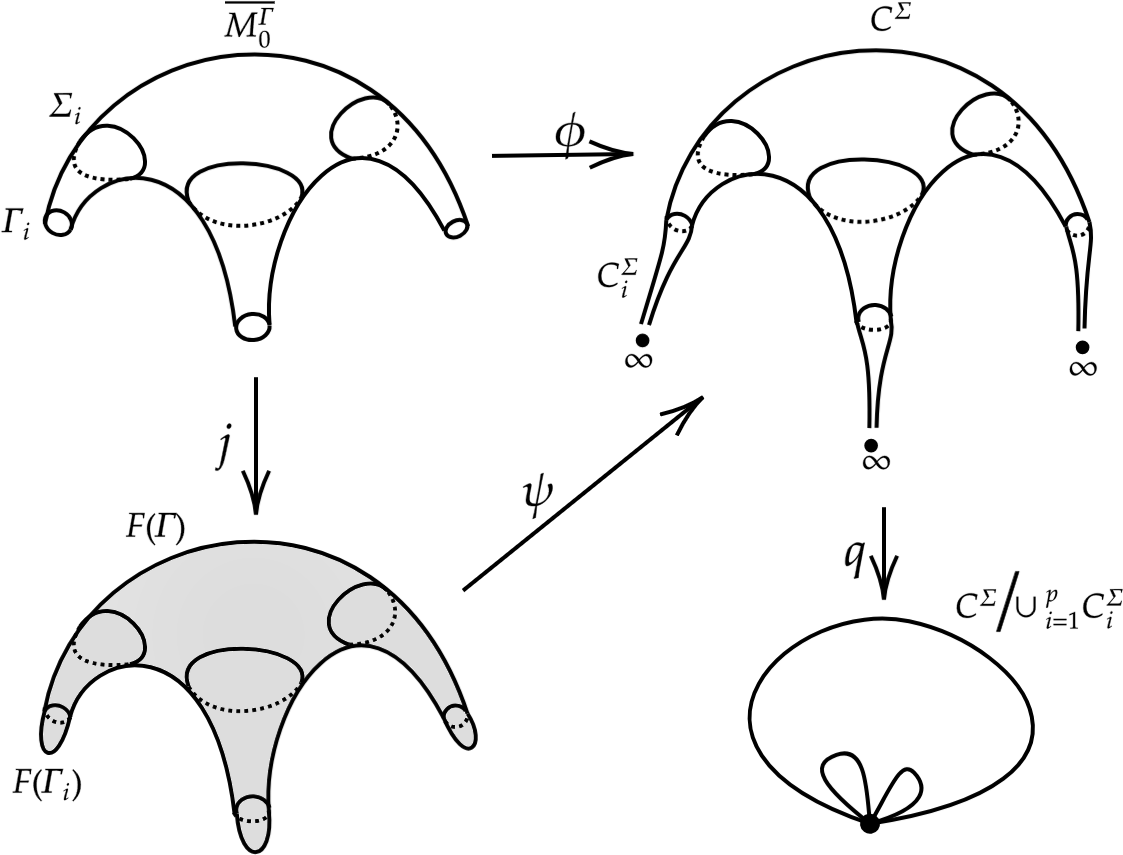}
    \caption{The maps $q,j,\phi$ and $\psi$.}
    \label{fig:map_schematic}
\end{figure}

\subsection{Proof of Main Theorem}
\label{sec:main}

Now we prove our main theorem by building our specific examples of maps $\phi$ and $\psi$ satisfying Lemma \ref{lemma:no_extension} in order to conclude the existence of a short geodesic flower.
\begin{maintheorem}
    Let $M$ be a complete, non-compact $n$-dimensional Riemannian manifold with locally convex ends and finite volume. Then $M$ admits a non-trivial geodesic net with one vertex, at most $(n+2)(n+1)/2$ edges, and total length at most $(n+2)(n+1)(n/2)\vol(M)^{1/n}.$
\end{maintheorem}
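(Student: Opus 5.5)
We argue by contradiction via pseudo-extension. Fix $\epsilon>0$ and set
$$L=(n+2)(n+1)\Big(\tfrac{n}{2}\vol(M)^{1/n}+2\epsilon\Big).$$
Suppose $M$ admits no non-trivial geodesic flower with at most $\binom{n+2}{2}$ edges and total length at most $L$. Theorem \ref{theorem:filling_no_cages}, applied with $k=n+1$, then gives a weak filling $\{H_i\}_{i=2}^{n+1}$ of $\cage_{n+1}^L$. Letting $\epsilon\to0$ at the end recovers the stated bound, via a compactness argument on flowers of bounded length near the core. Flowers lying far out in an end cannot be geodesic, by local convexity, so the limiting flower exists and is non-trivial.

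\textbf{Step 1: build $\psi$ on a fine triangulation.} Choose $\Gamma_i$, $F(\Gamma_i)$, $\Gamma$ and $F(\Gamma)$ as in Section \ref{sec:extension}, with $F(\Gamma)$ lying within $(n/2)\vol(M)^{1/n}+\epsilon$ of $\Gamma$. Triangulate $F(\Gamma)$ finely enough that every simplex has diameter less than $\epsilon$ in $L^\infty(M)$, compatibly with the subcomplexes $\overline{M^\Gamma_0}$ and $F(\Gamma_i)$. Define $\psi$ on vertices as follows. A vertex $v$ of $\overline{M^\Gamma_0}$ goes to itself. A vertex of $F(\Gamma)$ goes to a point of $\Gamma\cap M$ within $(n/2)\vol(M)^{1/n}+2\epsilon$ of it. Since $F(\Gamma_i)$ lies within $\epsilon$ of $\Gamma_i\subset M$, such a point can be found in $M$, and vertices of $F(\Gamma_i)$ can be sent into $\Gamma_i\subset$ the corresponding end.

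Each edge is sent to a minimizing geodesic between the images of its endpoints. By the triangle inequality,
$$d(\psi(u),\psi(v))\le 2\Big(\tfrac n2\vol(M)^{1/n}+2\epsilon\Big)+\epsilon.$$
Each $(n+1)$-simplex has $\binom{n+2}{2}$ edges, so the image of the $1$-skeleton of every simplex is a cage in $\cage_{n+1}^L$, after adjusting constants in $\epsilon$. We then extend $\psi$ to all simplices using $g_\Theta$ from Lemma \ref{lemma:filling}. Property 2 of that lemma makes the extensions agree on shared faces, and Property 1 makes $q\circ\psi$ continuous.

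\textbf{Step 2: boundary conditions.} Cages coming from simplices of $F(\Gamma_i)$ lie in $\overline{M^\Gamma_i}\subset\overline{M^\Sigma_j}$ up to a small neighbourhood. This needs the mesh to be below the convexity scale $\delta$, and may require pushing $\Gamma_i$ slightly deeper into the end, which Lemma \ref{lemma:gamma_ends} permits. By Property 3 of Lemma \ref{lemma:filling}, $\psi(F(\Gamma_i))\subset C^\Sigma_j$. Set $\phi=\psi\circ j$. On $\overline{M^\Gamma_0}$, $\phi$ is the inclusion on vertices and is filled in on small simplices.

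\textbf{Step 3: the main obstacle.} We must show that $(q\circ\phi)_*\neq 0$ on $H_n(\,\cdot\,;\bZ_2)$, so that Lemma \ref{lemma:no_extension} applies. The plan is to show that $q\circ\phi$ is homotopic, relative to the ends, to $q$ composed with the inclusion $\overline{M^\Gamma_0}\into C^\Sigma$. That inclusion sends the relative fundamental class of $(\overline{M^\Gamma_0},\cup\Gamma_i)$ to the fundamental class of $C^\Sigma/\cup C^\Sigma_i$, a pseudomanifold obtained by collapsing the ends. This class is non-zero mod $2$.

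For the homotopy, the simplices of $\overline{M^\Gamma_0}$ are tiny, with diameter below the injectivity radius of a compact neighbourhood of the core. The filling maps $H_i$ on such tiny cages act by the Birkhoff contraction, so $g_\Theta$ stays in a small ball around the simplex. A straight-line homotopy in normal coordinates then connects $\phi$ to the inclusion. We expect this to be the most delicate point, since $\phi$ is only continuous after applying $q$.

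Once this is established, Lemma \ref{lemma:no_extension} forbids the existence of $\psi$, contradicting Step 1. Hence a geodesic flower with at most $(n+2)(n+1)/2$ edges and length at most $L$ exists. Letting $\epsilon\to 0$ gives the bound $(n+2)(n+1)(n/2)\vol(M)^{1/n}$.
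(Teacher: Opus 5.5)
Your construction is the paper's. You fix the vertices of $\overline{M^\Gamma_0}$, send other vertices to nearby points of $\overline{M^\Gamma_0}$ (or of $\Gamma_i$), send edges to minimizing geodesics, and multiply the edge bound by $\binom{n+2}{2}$. You get the pair condition from local convexity and Property 3 of Lemma \ref{lemma:filling}, and you homotope $q\circ\phi$ to the inclusion. This does give, for each $\epsilon>0$, a geodesic flower $\Theta_\epsilon$ with at most $\binom{n+2}{2}$ edges and length at most $L_\epsilon$.

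The gap is in the passage $\epsilon\to0$. You justify compactness by asserting that flowers far out in an end cannot be geodesic, by local convexity. This is false: the condition in Definition \ref{def:ends} only constrains the geometry within $\delta$ of $\Sigma_i$. For example, take $\bR\times S^1$ with metric $dt^2+f(t)^2d\theta^2$, where $f>0$ is even and equals $e^{-|t|}(2+\sin e^{|t|})$ for $|t|\ge 1/2$.
\begin{itemize}
\item The surface is complete and has finite area.
\item Since $f'(1)<0$, the ends $\{t\ge 1\}$ and $\{t\le -1\}$ are locally convex.
\item Yet $f'$ changes sign at points $b_k\to\infty$, and each parallel $\{t=b_k\}$ is a closed geodesic, i.e.\ a one-edge geodesic flower, arbitrarily deep in an end.
\end{itemize}
Your contradiction hypothesis is global (no short flower anywhere in $M$), so you get no control on where $\Theta_\epsilon$ lies. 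The sequence may escape to infinity and Arzel\`a--Ascoli does not apply. You only obtain the bound $(n+2)(n+1)(n/2)\vol(M)^{1/n}+\epsilon$ for every $\epsilon$, not the stated one.

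The missing idea is to localize the hypothesis, as the paper does. In the construction behind Theorem \ref{theorem:filling_no_cages}, flowers of length at most $L$ that avoid the $2L$-neighbourhood of $M^\Sigma_0$ lie in a single end. They are sent by $H_i(\cdot,t)$, $t>0$, to that end's point at infinity, and the resulting discontinuity disappears after composing with $q$. So a weak filling of $\cage^L_{n+1}$ already exists as soon as no geodesic flower of length at most $L$ lies within distance $2L$ of $\overline{M^\Sigma_0}$ (see the end of Section \ref{sec:nets}).

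Run your argument with this hypothesis. Then all $\Theta_\epsilon$ with $\epsilon\le\epsilon_0$ lie in a fixed compact set $K$. Every edge of a flower is a geodesic loop, so it has length at least $2r_{inj}(K)>0$ and no edge degenerates. A subsequence therefore converges edgewise, and the balancing condition passes to the limit. The limit is a non-trivial geodesic flower with at most $\binom{n+2}{2}$ edges and length at most $(n+2)(n+1)(n/2)\vol(M)^{1/n}$.
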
 
\begin{figure}
    \centering
    \includegraphics[width=0.8\textwidth]{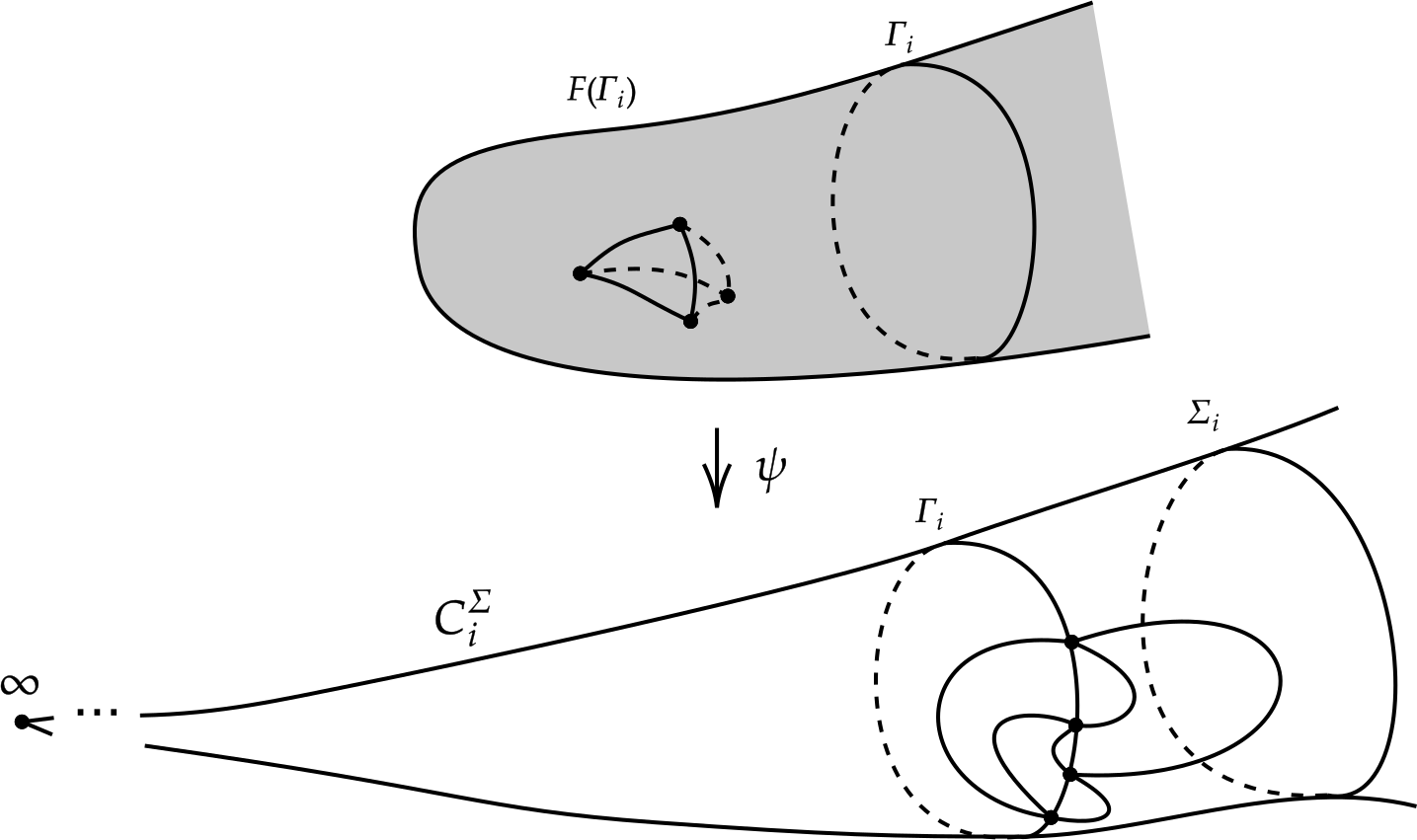}
    \caption{The image of a 3-cage under the map $\psi$ described in the proof of our main theorem.}
    \label{fig:1skeleton}
\end{figure}
\begin{proof} 
    We first show that for all sufficiently small $\delta>0$, there exists a geodesic flower on $M$ with one vertex, at most $(n+2)(n+1)/2$ edges, and total length at most $$L_\delta=\frac{(n+2)(n+1)n}{2}\vol(M)^{1/n}+\delta.$$
    In fact, this flower will lie within distance $2L_\delta$ of $M_0^\Gamma$.
    Fix the desired $\delta$ and define $\delta_1=\delta/((n+2)(n+1))$. 
    We define $\Gamma_i$, $M^\Gamma_i$, and so on, as summarized in Section \ref{sec:extension}. Subdivide the triangulation of $F(\Gamma)$ so that it consists solely of simplices with diameter at most $\delta_1/3$. Moreover, ensure that no simplex has interior intersecting both $M_0^\Sigma$ and $\overline{M_i^\Sigma}$ for any $i>0$. This ensures that if all vertices of a simplex lie in $\overline{M_i^\Sigma}$ for some $i>0$, then the simplex lies entirely in $\overline{M_i^\Sigma}$.
    \par
    We define a map 
    $$\psi:(F(\Gamma),\cup_{i=1}^m F(\Gamma_i))\to (C^\Sigma,\cup_{i=1}^p C_i^\Sigma)$$
    on the simplices of the triangulation of $F(\Gamma)$ as follows. 
    \begin{itemize}
    \item 
    \textbf{0-Skeleton.} Consider a point $x$ in the 0-skeleton of the triangulation.
    If $x\in \overline{M_0^\Gamma}$, map $x$ to itself (viewing $\overline{M_0^\Gamma}$ as a subset of $C^\Sigma$).
    If $x\in F(\Gamma_i)\setminus \Gamma_i$ for some $i$, map $x$ to any choice of nearest point in $\Gamma_i\subset C_i^\Sigma$.    If $x\in F(\Gamma)\setminus \Gamma$, we again map $x$ to any choice of nearest point in $\overline{M^\Gamma_0}\subset C^\Sigma$.
    \item 
    \textbf{1-Skeleton.}
    Consider an edge in the triangulation joining some pair of points $x$ and $y$.
    We map this edge to any choice of minimizing geodesic in $M\subset C^\Sigma$ connecting $\psi(x)$ and $\psi(y)$. 
    \end{itemize}
    We want to check that, as defined so far, $\psi$ maps $\cup_{i=1}^m F(\Gamma_i)$ to $\cup_{i=1}^p C_i^\Sigma$ so that it is a valid map of pairs. This is true by definition for vertices.
    If $x,y\in F(\Gamma_i)$ for some $i$ such that $x$ and $y$ are connected by an edge, then $\psi(x),\psi(y)\in \Gamma_i$ and 
    \begin{align*} 
        d_\infty(\psi(x),\psi(y))
        &\leq 
        d_\infty(\psi(x),x)
        +
        d_\infty(x,y)
        + d_\infty(y,\psi(y))
    \end{align*}        
    Note that if $x\in F(\Gamma_i)$ for some $i$, then
    $$
    d_\infty(x,\psi(x))\leq \fillrad(\Gamma_i),$$
    where we recall that $d_\infty$ is the sup-norm on the ambient space $L^\infty(M)$.
    Recall by Section \ref{sec:extension} that we can choose $\Gamma_i$ to have small enough volume that
    $\fillrad(\Gamma_i)\leq \delta_1/3.$ Therefore
    \begin{align*} 
        d_\infty(\psi(x),\psi(y))&\leq
        \fillrad(\Gamma_i)
        +
        \delta_1/3+\fillrad(\Gamma_i)
        <\delta_1.
    \end{align*}
    Therefore by local convexity of $M_i^\Sigma$, for a sufficiently small choice of $\delta$ (and hence of $\delta_1$) there is a unique minimizing geodesic connecting $\psi(x)$ and $\psi(y)$, and it lies in $\overline{M^\Sigma_i}\subset C^\Sigma_i$. Consequently, any 1-simplex in $F(\Gamma_i)$ is mapped into $\overline{M^\Sigma_i}\subset C_i^\Sigma$ by $\psi$, so that it is indeed a map of pairs.
    \par
    We would like to define $\psi$ on higher-dimensional simplices using a weak filling of cages. We note that the image of every $j$-cage under $\psi$ has length bounded in terms of $\delta$, $n$, and the volume of $M$ by the following argument. An edge in our triangulation connecting two points $x$ and $y$ is mapped to a minimizing geodesic connecting $\psi(x)$ and $\psi(y)$. Therefore, as noted above, every edge of a cage is mapped to a curve of length at most 
    $$
    d_\infty(\psi(x),\psi(y))
    \leq 
    d_\infty(\psi(x),x)
    +
    d_\infty(x,y)
    +
    d_\infty(y,\psi(y)).    
    $$
    By construction, if $z\in M^\Gamma_0$ then $z=\psi(z)$ so $d_\infty(z,\psi(z))=0$. If $z\in F(\Gamma_i)$ then $d_\infty(z,\psi(z))\leq \delta_1/3$ as noted above. Lastly, suppose that $z\in F(\Gamma)\setminus \Gamma$. We would like to bound $d_\infty(z,\psi(z))$ in terms of the filling radius of $\Gamma$, but $\psi(z)$ is a closest point to $z$ in $\overline{M_0^\Gamma}$, not a closest point in $\Gamma$. Let $w$ be the closest point in $\Gamma$ to $z$. If $w$ lies in $\overline{M_0^\Gamma}$, then
    $$d(z,\psi(z))\leq d(z,w)\leq \fillrad(\Gamma)$$
    since in this case $\psi(z)$ is as close to $z$ as any point in $\overline{M_0^\Gamma}$. The other possibility is that $w\not \in\overline{M_0^\Gamma}$, and hence $w\in F(\Gamma_i)\setminus \Gamma_i$ for some $i$.
    By the definition of $\psi$, we have $\psi(w)\in \Gamma_i\subset \overline{M_0^\Gamma}$ (see Figure \ref{fig:psi_w}). Therefore
    \begin{align*}
        d_\infty(z,\psi(z))&\leq d_\infty(z,\psi(w))\\
        &\leq d_\infty(z,w) + d_\infty(w,\psi(w))\\
        &
        \leq 
        \fillrad(\Gamma) + \fillrad(\Gamma_i)
    \end{align*}
    \begin{figure}
        \centering
        \includegraphics[width=0.5\textwidth]{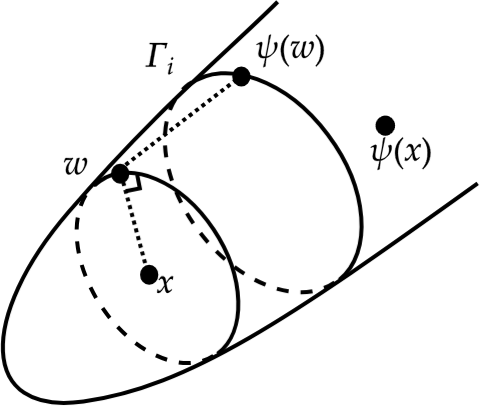}
        \caption{Possible configurations of the points $x,\psi(x),w$ and $\psi(w)$.}
        \label{fig:psi_w}
    \end{figure}
    This gives us an upper bound for $d_\infty(z,\psi(z))$ in all possible cases.
    \par 
    It remains to bound $d_\infty(x,y)$.
    Lastly, if $x$ and $y$ are connected by an edge in our triangulation, then $d_\infty(x,y)\leq\delta_1/3$ because our triangulation is very fine. Thus 
    \begin{align*}
        d_\infty(\psi(x), \psi(y))
        &\leq \delta_1/3+2\max\{\fillrad(\Gamma)+\max_i \fillrad(\Gamma_i),\delta_1/3\}.
    \end{align*}
    Using the inequalities established in Section \ref{sec:extension}, we have
    \begin{align*}
        d_\infty(\psi(x), \psi(y))\leq
        \delta_1+
        n
    \left(\vol_n(M)
    \right)^{1/n}+4\epsilon
    \end{align*}
    Note that this is the distance as measured in $L^\infty(M)$. However, because the embedding of $M$ in $L^\infty(M)$ is isometric, the distance between $\psi(x)$ and $\psi(y)$ as measured with respect to the original metric on $M$ equals $d_\infty(\psi(x),\psi(y))$. Thus an edge in $M$ connecting $\psi(x)$ and $\psi(y)$, being a minimizing geodesic in $M$, has length equal to $d_\infty(\psi(x),\psi(y))$. For suitably small choice of $\epsilon$, we can therefore ensure that the maximum length of the image of any edge, denoted by $L_E$, satisfies
    \begin{align*}
        L_E\leq
        2\delta_1
        +n\vol(M)^{1/n}.
    \end{align*}
    Because each $j$-cage has at most 
    $\binom{j+1}{2}$
    edges and we only need to consider $j\leq n+1$, the image of a $j$-cage under $\psi$ has total length at most
    \begin{align*}
        \binom{j+1}{2}L_E
        &\leq
        \frac{(n+2)(n+1)}{2}
        \left(2\delta_1
        +n\vol(M)^{1/n}\right)\\
        &=
        \delta+\frac{(n+2)(n+1)n}{2}\vol(M)^{1/n}
        =L_\delta.
    \end{align*}
    \par
    We now assume that the $2L_\delta$-neighbourhood of $M_0^\Sigma$ does not admit a geodesic flower of length at most $L_\delta$. By the proof of Theorem \ref{theorem:filling_no_cages}, $M$ admits a weak filling of $\cage^{L_\delta}_n$. We now use this fact to define of $\psi$ on all higher-dimensional simplices.
    \begin{itemize} 
    \item 
    \textbf{$l$-Skeleton, $2\leq l\leq n+1$.} 
    At this point, we have defined $\psi$ on all $k$-cages in the triangulation of $F(\Gamma)$. Consider an $l$-cell $\rho$ of the triangulation of $F(\Gamma)$. Its 1-skeleton is an $l$-cage $\Theta$. By Lemma \ref{lemma:filling}, we can extend $\psi|_\Theta: \Theta\to C^\Sigma$ to a map $g_\Theta:\rho \to C^\Sigma$ such that $q\circ g_\Theta$ is continuous. This defines $\psi$ on $\rho$. 
    \end{itemize}
    Moreover, by the third property of Lemma \ref{lemma:filling}, if $\Theta$ lies in $\overline{M^\Sigma_i}$, then so does its image under $\psi$. In particular, $\psi$ maps any simplex with all vertices in $F(\Gamma_i)$ into $\overline{M^\Sigma_i}\subset C_i^\Sigma$, ensuring it is a valid map of pairs.
    \par    
    This defines the map $\psi$ on all of $F(\Gamma)$.  We define $\phi$ by setting $\phi=\psi\circ j$. Although we have been describing $\phi$ as the inclusion map, it actually differs slightly from the inclusion map on higher-dimensional simplices because we defined it using a weak filling of cages. However, we will prove below that it is homotopic to the inclusion map, which is sufficient for our purposes.
    \par
    We will now show that $\psi$ is an example of the psuedo-extension map from Lemma \ref{lemma:no_extension}. This contradiction implies that no weak filling of $\cage^{L_\delta}_{n+1}$ exists, and hence that a geodesic flower of length at most $L_\delta$ must exist. We need to check that 
    $$q\circ \psi:(F(\Gamma),\cup_{i=1}^m F(\Gamma_i))\to C^\Sigma/\cup_{i=1}^p C_i^\Sigma$$ 
    is continuous, that the map 
    $$\phi=\psi\circ j:(\overline{M^\Gamma_0},\cup_i^m\Gamma_i)\to (C^\Sigma,\cup_{i=1}^p C_i^\Sigma)$$ 
    satisfies the conditions that $q\circ \phi$ is continuous and that $(q\circ\phi)_*$ is not the zero map on $n$-dimensional homology in $\bZ_2$ coefficients.
    \par  
    By Property 5 of Definition \ref{def:filling_cages}, $q\circ \psi$ is continuous. This takes care of the first desired property.
    Next, consider $q\circ\phi=q\circ\psi\circ j$. This map is continuous because $q\circ \psi$ and $j$ are both continuous. We claim that this map is homotopic to the inclusion map $\iota:(\overline{M^\Gamma_0},\cup_i^m\Gamma_i)\into (C^\Sigma,\cup_{i=1}^p C_i^\Sigma)$. Vertices in the triangulation of $\overline{M^\Gamma_0}$ are mapped by $\phi$ to themselves. Edges in $\overline{M^\Gamma_0}$ are mapped by $\phi$ to (unique) minimizing geodesics, which we can assume are equal to the original edges of our triangulation. However, higher dimensional simplices are mapped by shortening the associated cage, and so will not necessarily be mapped to themselves under $\phi$. Since each such simplex is at most $\delta_1/3$ in diameter, it is contained in a metric ball of radius $\delta_1/3$. Therefore if we pick $\delta_1$ smaller than the injectivity radius of $\overline{M^\Gamma_0}$, our shortening process will contract these cages to a point curve within this (convex) metric ball. Therefore the image of these cages under $\phi$ lies within distance $\delta_1/3$ of the original cage. The image of $\overline{M^\Gamma_0}$ under $\phi$ can easily be homotoped to $\overline{M^\Gamma_0}\subset C^\Sigma$, such as by flowing each point along the geodesic connecting it to its preimage. This homotopes $q\circ \phi$ to $\iota$.
    \par 
    This means that $q\circ \phi$ is a non-zero map on $n$-dimensional homology because $\iota$ is a non-zero map by the following argument. 
    Consider a relative cycle in $H_n(\overline{M^\Gamma_0},\cup_i^m\Gamma_i, \bZ_2)$ whose image is $M^\Gamma_0$ modulo $\cup_i^m\Gamma_i$. This cycle represents the fundamental class in relative homology. Because $\overline{M^\Sigma_0}\setminus \overline{M^\Gamma_0}\subset \cup_{i=1}^p C_i^\Sigma$, this relative cycle is mapped by $\iota$ to a relative cycle whose image is $C^\Sigma$ modulo $ \cup_{i=1}^p C_i^\Sigma$. This cycle is non-zero-- in fact, it represents the generator of $H_n(\overline{M^\Sigma_0},\cup_{i=1}^p C_i^\Sigma, \bZ_2)$. Thus $\iota_*$, and hence $(q\circ \phi)_*$, is non-zero.
    \par
    This proves that our maps satisfy the required hypotheses.
    Thus,
    Lemma \ref{lemma:no_extension} states that there is no such map $\psi$. Because the only obstruction to our construction of $\psi$ is the existence of a weak filling of $\cage^{L_\delta}_{n+1}$ (i.e., the hypothesis of Lemma \ref{lemma:filling}), such a filling cannot exist. Therefore by Theorem \ref{theorem:filling_no_cages}, $M$ must admit a non-trivial geodesic flower with at most ${n+1\choose 2}$ edges and of total length at most $L_\delta$.
    \par In order to prove our main theorem, it remains to remove the dependence on $\delta$ in the above length bound.
    Consider the sequence of $\delta$ values $\delta_k=1/k$. We have established that for all sufficiently large $k>0$, $M$ admits a geodesic flower $\Theta_k$ of length at most $$L_\delta=\frac{(n+2)(n+1)n}{2}\vol(M)^{1/n}+1/k.$$ We would like to show that a subsequence of these geodesic flowers converges to a geodesic flower of length at most $$L_\infty=\frac{(n+2)(n+1)n}{2}\vol(M)^{1/n}.$$  
    However, we must avoid the possibility that these flowers escape to infinity. In fact, the construction of a weak filling of $\cage^L_{n+1}$ is obstructed by the existence of a short geodesic flower of length at most $L$ that lies within a distance $2L$ of $\overline{M_0^\Sigma}$, as remarked at the end of Section \ref{sec:nets}. Therefore, since $L_k\leq L_1$, our sequence of nets lies in a compact set. Therefore by the Arzel\`a--Ascoli theorem, a subsequence of our geodesic flowers converges to a geodesic flower of length at most $L_\infty$, proving our theorem.
\end{proof}

\bibliography{biblio}

\end{document}